\theoremstyle{definition}
\newtheorem{defn}{Definition}[section]
\theoremstyle{plain}
\newtheorem{thm}{Theorem}[section]
\newtheorem{prop}{Proposition}[section]
\newtheorem{cor}{Corollary}[section]
\newtheorem{lma}{Lemma}[section]
\theoremstyle{remark}
\newtheorem{rmk}{Remark}[section]
\newtheorem{exm}{Example}[section]
\newcommand{\Z}{\mathbb{Z}}
\newcommand{\N}{\mathbb{N}}
\newcommand{\R}{\mathbb{R}}
\newcommand{\E}{\mathbb{E}}
\renewcommand{\P}{\mathbb{P}}
\newcommand{\e}{\mathrm{e}}
\newcommand{\ud}{\mathrm{d}}
\newcommand{\law}{\stackrel{\text{law}}{=}}
\numberwithin{equation}{section}
\font\eka=cmex10
\def\ind{\mathrel{\hbox{\rlap{%
\hbox to 7.5pt{\hrulefill}}\raise6.6pt\hbox{\eka\char'167}}}}
\begin{document}

\title{\textbf{Representation of stationary and stationary increment processes via Langevin equation and self-similar processes}}
\date{\today}




\renewcommand{\thefootnote}{\fnsymbol{footnote}}

\author{Lauri Viitasaari\footnotemark[1],\footnotemark[2]}

\footnotetext[2]{Department of Mathematics and System Analysis, Aalto University School of Science, Helsinki P.O. Box 11100, FIN-00076 Aalto,  Finland, {\tt lauri.viitasaari@aalto.fi}.}

\footnotetext[1]{Department of Mathematics, Saarland University, Post-fach 151150, D-66041 Saarbr\"ucken, Germany.}

\maketitle

\abstract{Let $W_t$ be a standard Brownian motion. It is well-known that the Langevin equation $\ud U_t = -\theta U_t\ud t + \ud W_t$ defines a stationary process called 
Ornstein-Uhlenbeck process. Furthermore, Langevin equation can be used to construct other stationary processes by replacing Brownian motion $W_t$ 
with some other process $G$ with stationary increments. In this article we prove that the converse also holds and all continuous stationary processes arise 
from a Langevin equation with certain noise $G=G_\theta$. Discrete analogies of our results are given and applications are discussed.

\

\noindent {\bf Keywords}: Stationary processes; Stationary increment processes; self-similar processes; Lamperti transform; Langevin equation

\noindent{\bf MSC 2010: 60G07, 60G10, 60G18. }

\section{Introduction}
Let $G=(G_t)_{t\in\R}$ be a continuous process with stationary increments and consider a Langevin equation
\begin{equation}
\label{langevin}
\ud U_t = - \theta U_t \ud t + \ud G_t, \quad t \in \R
\end{equation}
with some condition on $U_0$. If $G=W$ is a standard Brownian motion, then it is well-known that the equation \eqref{langevin} has a stationary solution that is called 
the Ornstein-Uhlenbeck process. Consequently, Langevin equation is connected to Ornstein-Uhlenbeck processes and can be used to construct stationary processes. Furthermore, 
Langevin type equations also have applications in statistical physics which highlights the importance of these equations even more. 

A natural question related to 
equation \eqref{langevin} is whether it has a stationary solution with suitably chosen initial condition for more general noise term $G$, 
and this question was studied 
recently in \cite{nie-con} for general stationary increment noise $G$. 
The main result in \cite{nie-con} was that under mild integrability assumptions on the driving force $G$, equation \eqref{langevin} has a stationary solution.

Another useful tool to construct stationary processes is via Lamperti theorem \cite{lamperti} which states that each $H$-self-similar process $X$ 
(for details on self-similar processes we refer to  monographs \cite{emb-mae,fla-bor-amb,tudor} dedicated to the subject) can be written as a 
Lamperti-transform $X=\mathcal{L}_H Y$, where $Y$ is a stationary process. Moreover, Lamperti-transform is invertible and hence stationary processes can be constructed 
from $H$-self-similar process via inverse transform $Y = \mathcal{L}_H^{-1}X$. Self-similar processes also have numerous applications. For example, 
the connection between self-decomposable laws and Levy processes (for details, we refer to \cite{bertoin,sato}) with self-similar processes is studied 
in \cite{jur-ver,wolfe,sato2,jean-pit-yor} to name a few. Especially, Jeanblanc et al. \cite{jean-pit-yor} used $H$-self-similar processes to connect two different 
representations derived in \cite{jur-ver,wolfe} and in \cite{sato2} for self-decomposable laws where the first representation is in terms of Levy processes 
(so called background driving Levy process) and the second representation is in terms of $H$-self-similar processes.

A process with special interest is the case of fractional Brownian motion with $H\in(0,1)$ and Ornstein-Uhlenbeck processes associated with it has been studied 
in \cite{b-c-s,c-k-m,k-s}. 
Recall that $B^H$ is the only Gaussian process which is $H$-self-similar and has stationary increments. Consequently, both approaches can be used to construct stationary processes. 
However, it is also known that the resulting processes are the same (in law) only in the case $H=\frac{1}{2}$, i.e. in the case of standard Brownian motion. 
On the other hand, it was proved by Kaarakka and Salminen \cite{k-s} that even the Lamperti transform of fractional Brownian motion can be defined as a solution to Langevin type 
equation with some driving noise $G$.  Furthermore, statistical problems for fractional Ornstein-Uhlenbeck processes have been studied at least in 
\cite{a-m,a-v2,b-i,h-n,k-l,x-z-x}. For research related to more general self-similar Gaussian processes, see also \cite{nu-po,yazigi}.

In this article we make use of the Lamperti theorem to characterise (continuous) stationary processes as solutions 
to the Langevin equation 
(\ref{langevin}) with some noise process $G$ belonging to a certain class $\mathcal{G}_H$. More precisely, as our main result we show that 
a process is stationary if and only if it is a solution to the Langevin equation with noise $G\in\mathcal{G}_H$. As a simple consequence it follows that Langevin equation 
(\ref{langevin}) has a stationary solution if and only if the noise process $G$ belongs to $\mathcal{G}_H$; a result which generalises the main findings of \cite{nie-con}. 
Moreover, we characterise the class $\mathcal{G}_H$ 
in terms of $H$-self-similar processes, and hence the results of this paper connects the two mentioned approaches to construct stationary processes in a natural way. 
We also present discrete analogies to our main theorems and consider some applications. 
For example, as a consequence of our main result we obtain that all discrete time stationary models reduces to a \emph{$AR(1)$-model} with a non-white noise. 

The rest of the paper is organised as follows. In section \ref{sec:pre} we introduce our notation and preliminary results, and in section \ref{sec:main} we present and prove our main results. Section \ref{sec:ex} is devoted to applications and examples.
\section{Notation and preliminaries}
\label{sec:pre}
Throughout the paper we assume that all processes $(X_t)_{t\in \R}$ have 
continuous paths almost surely (for extensions, see remark \ref{rmk:generalisation} below).  
Consequently, we can define integrals of form
$
\int_s^t e^{Hu}\ud X_u
$
over a compact interval $[s,t]$ and some constant $H\in \R$ via integration by parts formula
$$
\int_s^t e^{Hu}\ud X_u = e^{Ht}X_{t} - e^{Hs}X_{s} - H\int_s^t X_u e^{Hu}\ud u,
$$
where the last integral is understood as a Riemann integral. For numbers $t<s$ we use standard definition $\int_s^t = - \int_t^s$.
We also consider indefinite integrals of type
$
\int_{-\infty}^t e^{Hu}\ud X_u
$
which are defined similarly as 
$$
\int_{-\infty}^t e^{Hu}\ud X_u = e^{Ht}X_t - H \int_{-\infty}^t e^{Hu}X_u \ud u
$$
provided that the integral on the right exists almost surely.
\begin{rmk}
\label{rmk:generalisation}
We remark that for our purposes, the key ingredient is the fact that the above integrals under consideration can be defined as Riemann integrals and integration by parts 
is valid. Hence the assumption of almost sure continuity is not needed \emph{a priori} but it is made to ensure that the integrals considered can be understood pathwise 
without any additional technicalities. In comparison, in \cite{nie-con} the authors assumed integrability of the driving force of the Langevin equation which ensured 
that the integrals can be understood as $L^p$-limits of Riemann-Stieltjes sums. For generalisations, 
see also subsection \ref{subsec:additive}. 
\end{rmk}
We denote $X_t\law Y_t$ if finite dimensional distributions of $X$ and $Y$ are equal. Throughout the paper, we consider strictly stationary processes, i.e. processes $U=(U_t)_{t\in \R}$ for which $U_{t+h} \law U_t$ for every $h \in \R$. However, we remark that all the results of this paper are true for covariance stationary processes as well.

Next we recall definition of Lamperti transform and its inverse together with the famous Lamperti theorem. First we recall the definition of self-similar processes. 
\begin{defn}
Let $H>0$. A process $X=(X_t)_{t\ge 0}$ with $X_0=0$ is $H$-self-similar if
$$
X_{at} \law a^H X_t
$$
for every $a>0$. 
The class of all $H$-self-similar processes on $[0,\infty)$ are denoted by $\mathcal{X}_H$.
\end{defn}
\begin{defn}
Let $X=(X_t)_{t\geq 0}$ and $U=(U_t)_{t\in \R}$ be stochastic processes. We define
$$
(\mathcal{L}_H U)_t = t^H U_{\log t}, \quad t> 0
$$
and its inverse
$$
(\mathcal{L}_H^{-1}X)_t = e^{-Ht}X_{e^t},\quad t\in\R.
$$
\end{defn}
The result due to Lamperti \cite{lamperti} gives a one-to-one correspondence between stationary processes and $H$-self-similar processes.
\begin{thm}[Lamperti]
\label{thm:lamperti}
Let $U=(U_t)_{t\in\R}$ be a stationary process. Then $X = \mathcal{L}_H U$ is $H$-self-similar. Conversely, if $X=(X_t)_{t\geq 0}$ is $H$-self-similar, then $U = \mathcal{L}_H^{-1}X$ is stationary.
\end{thm}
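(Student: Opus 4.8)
The plan is to prove both implications by a direct computation at the level of finite-dimensional distributions, using only the definitions of the two transforms together with the elementary fact that if two random vectors agree in law, then so do their images under any fixed deterministic measurable map; in particular, multiplying a random vector coordinatewise by fixed constants preserves equality in law. I would fix throughout times in the relevant domain and recall that $X\law Y$ means precisely that all such finite-dimensional distributions coincide, so that the scaling fact can be applied jointly across the chosen time points.

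For the forward direction, assume $U$ is stationary and set $X=\mathcal{L}_H U$, so that $X_t=t^H U_{\log t}$ for $t>0$. Given $a>0$ and $t_1,\dots,t_n>0$, I would compute
$$
X_{at_i}=(at_i)^H U_{\log(at_i)}=a^H\,t_i^H\,U_{\log t_i+\log a}.
$$
Stationarity of $U$ with shift $h=\log a$ gives
$$
\bigl(U_{\log t_1+\log a},\dots,U_{\log t_n+\log a}\bigr)\law\bigl(U_{\log t_1},\dots,U_{\log t_n}\bigr),
$$
and applying the deterministic coordinatewise scaling $(x_1,\dots,x_n)\mapsto(a^H t_1^H x_1,\dots,a^H t_n^H x_n)$ to both sides yields $(X_{at_1},\dots,X_{at_n})\law(a^H X_{t_1},\dots,a^H X_{t_n})$, which is exactly the self-similarity of $X$.

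For the converse, assume $X$ is $H$-self-similar and set $U=\mathcal{L}_H^{-1}X$, so that $U_t=e^{-Ht}X_{e^t}$. Given $h\in\R$ and $t_1,\dots,t_n\in\R$, I would write
$$
U_{t_i+h}=e^{-H(t_i+h)}X_{e^{t_i+h}}=e^{-Hh}\,e^{-Ht_i}\,X_{e^h e^{t_i}}.
$$
Self-similarity with scaling factor $a=e^h$ applied to the points $e^{t_1},\dots,e^{t_n}$ gives
$$
\bigl(X_{e^h e^{t_1}},\dots,X_{e^h e^{t_n}}\bigr)\law\bigl(e^{Hh}X_{e^{t_1}},\dots,e^{Hh}X_{e^{t_n}}\bigr),
$$
and scaling both sides coordinatewise by the constants $e^{-Hh}e^{-Ht_i}$ collapses the factors $e^{Hh}$ and produces $(U_{t_1+h},\dots,U_{t_n+h})\law(U_{t_1},\dots,U_{t_n})$, which is the stationarity of $U$.

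There is no serious obstacle here; the only points requiring care are bookkeeping ones. First, one must consistently read $\law$ as equality of all finite-dimensional distributions rather than as a single marginal identity, so that the coordinatewise scaling can be invoked jointly across the chosen times. Second, it is worth isolating the substitution $h=\log a$ (equivalently $a=e^h$) as the bijection between the multiplicative scaling on $(0,\infty)$ and the additive shift on $\R$ that underlies the entire correspondence. Finally, to have $X=\mathcal{L}_H U$ qualify as a self-similar process in the sense of the definition one also checks the boundary value $X_0=0$: writing $t=e^s$ one has $t^H U_{\log t}=e^{Hs}U_s$, and since $U_s$ is distributionally constant while $e^{Hs}\to0$ as $s\to-\infty$, this tends to $0$, the natural boundary value under the standing continuity convention.
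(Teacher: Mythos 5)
Your proof is correct and is the standard argument for Lamperti's theorem; the paper itself gives no proof, citing the original reference, so there is nothing to compare against. Both directions are handled properly at the level of finite-dimensional distributions, with the shift/scaling dictionary $h=\log a$ made explicit. The only point worth flagging is the boundary value: your argument that $e^{Hs}U_s\to 0$ as $s\to-\infty$ is really a statement of convergence in probability (or in distribution), since $U_s$ is only \emph{distributionally} constant and a fixed path $s\mapsto U_s(\omega)$ need not stay bounded as $s\to-\infty$; this is exactly how the classical theorem is stated ($X_t\to 0$ in probability as $t\to 0^+$), so it is not a gap, but the phrase ``tends to $0$'' should not be read pathwise.
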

\begin{rmk}
Note that for a given $H$-self-similar process $X$ the process
$$
U_t = e^{-\theta Ht}X_{e^{\theta t}},\quad t\in \R
$$
also defines a stationary process. However, for our purposes we only consider the case $\theta=1$.
\end{rmk}
We will consider the following class of processes.
\begin{defn}
Let $H>0$ be fixed and let $G=(G_t)_{t\in \R}$ be a stochastic process. We denote $G \in \mathcal{G}_H$ if $G_0=0$, $G$ has stationary increments, and 
\begin{equation}
\int_{-\infty}^0 e^{Hu}\ud G_u
\end{equation}
exists and defines an almost surely finite random variable.
\end{defn}
We will also give discrete versions of our results in which case we have the following analogous definition. 
\begin{defn}
Let $H>0$ be fixed and let $G=(G_n)_{n\in \N}$ be a stochastic process. We denote $G \in \mathcal{G}^d_H$ if $G_0=0$, $G$ has stationary increments, and
\begin{equation}
\lim_{k\rightarrow -\infty} \sum_{j=k}^0 e^{jH}\Delta G_j
\end{equation}
exists and defines an almost surely finite random variable, where $\Delta G_j = G_j - G_{j-1}$.
\end{defn}
It is not clear in general which stationary increment processes $G$ with $G_0=0$ belongs to $\mathcal{G}_H$ although this is the case provided that 
$
\lim_{s\rightarrow -\infty} e^{rs}|G_s| = 0
$
almost surely for some $r<H$. The following proposition shows that a mild integrability is sufficient for this purpose.
\begin{thm}
\label{lma:extension}
Let $G=(G_t)_{t \in \R}$ be a stationary increment process with $G_0=0$ such that 
\begin{equation}
\label{eq:integrability}
\sup_{t\in[0,1]}\E \left(\log|G_{t}|\textbf{1}_{\{|G_t|>1\}}\right)^{2+\delta} < \infty
\end{equation}
for some $\delta>0$, where $\textbf{1}_{A}$ denotes the indicator of a set $A$. Then $G\in\mathcal{G}_H$ for every $H>0$.
\end{thm}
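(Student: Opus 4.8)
The plan is to verify the pointwise criterion recalled just before the statement: it suffices to prove that $\lim_{s\to-\infty}e^{rs}|G_s|=0$ almost surely for some (indeed every) $r>0$, since then for any fixed $H>0$ one may pick $r\in(0,H)$ and conclude $G\in\mathcal{G}_H$. Thus the whole argument reduces to an almost sure growth bound showing that $|G_s|$ grows strictly subexponentially as $s\to-\infty$, and I would split this into control along the integer grid and control of the oscillation between consecutive integers.

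For the integer grid, write $G_{-n}=\sum_{j=1}^n(G_{-j+1}-G_{-j})$ as a telescoping sum of unit increments. By stationarity of the increments each summand has the law of $G_1$, so \eqref{eq:integrability} gives $\E(\log_+|G_{-j+1}-G_{-j}|)^{2+\delta}\le C$ with $C=\sup_{t\in[0,1]}\E(\log|G_t|\1_{\{|G_t|>1\}})^{2+\delta}$. Using $\log_+|G_{-n}|\le\log n+\max_{1\le j\le n}\log_+|G_{-j+1}-G_{-j}|$ together with $\E(\max_j X_j)^{2+\delta}\le\sum_j\E X_j^{2+\delta}$ for nonnegative $X_j$, I would obtain the linear growth $\E(\log_+|G_{-n}|)^{2+\delta}\le C'n$. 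Markov's inequality then yields $\P(|G_{-n}|>e^{\epsilon n})\le C'n/(\epsilon n)^{2+\delta}=C'\epsilon^{-(2+\delta)}n^{-(1+\delta)}$, and the Borel--Cantelli lemma gives $|G_{-n}|\le e^{\epsilon n}$ for all large $n$ almost surely; choosing $\epsilon<r$ this forces $e^{-rn}|G_{-n}|\to0$. It is precisely here that the exponent $2+\delta>2$ is essential: the telescoping forces the linear bound $\E(\log_+|G_{-n}|)^{2+\delta}\lesssim n$, and the resulting tail $n^{-(1+\delta)}$ is summable only because the exponent exceeds $2$.

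To pass from the grid to all real $s$, for $s\in[-n-1,-n]$ I would bound $e^{rs}|G_s|\le e^{-rn}\bigl(|G_{-n}|+Y_n\bigr)$, where $Y_n=\sup_{s\in[-n-1,-n]}|G_s-G_{-n}|$. By stationarity of the increments $Y_n$ has, for every $n$, the fixed law of $\sup_{\tau\in[0,1]}|G_{-\tau}|$, so it remains to show $e^{-rn}Y_n\to0$, for which (again by Borel--Cantelli) it is enough that $\sum_n\P(Y_n>e^{\epsilon n})<\infty$. Combining the two regimes would yield $\lim_{s\to-\infty}e^{rs}|G_s|=0$ and hence the claim.

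The main obstacle is this last step, the control of the supremum $\sup_{\tau\in[0,1]}|G_\tau|$. Pointwise log-moments do not transfer to a supremum by any soft argument: a union bound over a dyadic refinement of $[0,1]$ produces exponentially many increments, which merely logarithmic moments cannot absorb, and no modulus-of-continuity estimate is available, since \eqref{eq:integrability} is uniform in $t$ and carries no decay as the increment length shrinks. To circumvent this I would not insist on the pointwise limit but instead establish absolute convergence of the integral directly: since $\int_{-\infty}^0e^{Hu}|G_u|\,\ud u\le\sum_{n\ge0}e^{-Hn}\int_{-n-1}^{-n}|G_u|\,\ud u$ and $\int_{-n-1}^{-n}|G_u-G_{-n}|\,\ud u$ has the law of $\int_0^1|G_w|\,\ud w$, the oscillation term need only satisfy $\E\log_+\int_0^1|G_w|\,\ud w<\infty$, a strictly weaker demand than a bound on the supremum because narrow spikes contribute little to the integral. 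Establishing this estimate---passing from the Fubini bound $\E\int_0^1(\log_+|G_w|)^{2+\delta}\,\ud w\le C$ to a bound on $\E\log_+\int_0^1|G_w|\,\ud w$ in spite of possible concentration---is the delicate heart of the proof, and I expect it to consume most of the work.
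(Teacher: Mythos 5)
Your integer-grid argument is correct and is essentially the paper's own. The paper telescopes $|G_N|\le\sum_{k=1}^N|G_k-G_{k-1}|$, applies the union bound $\P\left(\sum_{k=1}^N|G_k-G_{k-1}|>\epsilon\right)\le N\,\P\left(|G_1|>\epsilon/N\right)$, converts the tail of $|G_1|$ at level $e^{HN}\epsilon/N$ into a tail of $\log|G_1|\textbf{1}_{\{|G_1|>1\}}$ at level $cN$, and uses Markov with exponent $2+\delta$ to get the summable bound $CN^{-1-\delta}$ before invoking Borel--Cantelli. Your variant via $\log_+|G_{-n}|\le\log n+\max_j\log_+|\Delta_j|$ and $\E(\max_j X_j)^{2+\delta}\le\sum_j\E X_j^{2+\delta}$ lands in exactly the same place, and you identify the role of the exponent $2+\delta>2$ for the same reason the paper needs it (working on the negative half-line rather than the positive one is immaterial by two-sidedness of the stationary increments).

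However, the proposal is not a complete proof, and you say so yourself: the passage from the integer grid to the continuum is left open. You correctly observe that \eqref{eq:integrability} is a condition on the one-dimensional marginals of $G_t$, $t\in[0,1]$, and by itself controls neither the path functional $\sup_{s\in[0,1]}|G_s|$ nor, after your proposed switch to absolute convergence of $\int_{-\infty}^0 e^{Hu}|G_u|\,\ud u$, the integral $\int_0^1|G_w|\,\ud w$. The estimate you defer as the ``delicate heart''---a summable tail for $\int_0^1|G_w|\,\ud w$ at level $e^{\epsilon n}$, or $\E\log_+\int_0^1|G_w|\,\ud w<\infty$---is never established, and the naive route genuinely fails: $\log_+\int f$ is not controlled by $\int\log_+ f$ (a spike of height $e^M$ on a set of measure $e^{-M/2}$ keeps $\int\log_+ f$ bounded while $\log_+\int f\ge M/2$), so the Fubini bound you do have is insufficient. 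For what it is worth, the paper's own proof disposes of this step with the single sentence that the convergence $e^{-HN}|G_t-G_N|\to0$ ``can be proved similarly,'' which likewise does not explain how a pointwise-in-$t$ marginal hypothesis is upgraded to uniform (or integrated) control over $[N,N+1]$; you have put your finger on the one real weak point of the published argument, but identifying the gap is not the same as closing it, so as a proof your proposal remains incomplete.
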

\begin{proof}
Note that it is sufficient to prove that for every $H>0$ we have
\begin{equation}
\label{convergence}
\lim_{t\rightarrow \infty}e^{-Ht}|G_t| \rightarrow 0
\end{equation}
almost surely. Suppose first that $t=N$ is an integer. We have
$$
|G_N| \leq \sum_{k=1}^N |G_k-G_{k-1}|.
$$
Furthermore, it is clear that for each $\epsilon>0$ we have
$$
\P\left(\sum_{k=1}^N |G_k-G_{k-1}| > \epsilon\right)\leq \sum_{k=1}^N \P\left(|G_k-G_{k-1}|>\frac{\epsilon}{N}\right)
$$
and by stationarity of the increments we have
$$
\sum_{k=1}^N \P\left(|G_k-G_{k-1}|>\frac{\epsilon}{N}\right)= N \P\left(|G_1|>\frac{\epsilon}{N}\right).
$$
Consequently, we obtained
\begin{equation}
\label{eq:middle}
\P\left(e^{-HN}|G_N|>\epsilon\right) \leq N \P\left(|G_1|>\frac{e^{HN}\epsilon}{N}\right).
\end{equation}
Here
$$
\P\left(|G_1|>\frac{e^{HN}\epsilon}{N}\right) = \P\left(\log|G_1|>HN + \log\frac{\epsilon}{N}\right)
$$
and since $ HN + \log\frac{\epsilon}{N} \geq cN$ for some constant $c=c_{\epsilon}$ and large enough $N$, we get
$$
\P\left(\log|G_1|>HN + \log\frac{\epsilon}{N}\right) \leq \P\left(\log|G_1|>cN\right) = \P\left(\log|G_1|\textbf{1}_{\{|G_1|>1\}}>cN\right).
$$
Furthermore, we have
$$
\P\left(\log|G_1|\textbf{1}_{\{|G_1|>1\}}>cN\right) \leq C_{\epsilon}\frac{\E |\log|G_1|\textbf{1}_{\{|G_1|>1\}}|^{2+\delta}}{N^{2+\delta}}.
$$
In view of \eqref{eq:middle}, this implies
$$
\sum_{N=1}^\infty\P\left(e^{-HN}|G_n|>\epsilon\right) \leq C \sum_{N=1}^\infty N^{-1-\delta} < \infty
$$
for some constant $C>0$ and consequently, the claim follows by Borel-Cantelli Lemma. To conclude let $t$ be arbitrary and denote by $N$ the largest integer satisfying $N\leq t$. We have
$$
e^{-Ht}|G_t| \leq e^{-HN}|G_t - G_N| + e^{-HN}|G_N|.
$$
Now $e^{-HN}|G_N| \to 0$ almost surely by computations above, and the convergence 
$e^{-HN}|G_t - G_N|\to 0$ can be proved similarly.
\end{proof}
\begin{rmk}
Clearly we have analogous result for discrete processes.
\end{rmk}

Finally, the following proposition provides a counterexample which shows that not all stationary increment processes $G$ with $G_0=0$ belong to $\mathcal{G}_H$. 
We will present the counterexample only for discrete processes.
\begin{prop}
There exists a stationary increment process $G=(G_n)_{n\in\N}$ with $G_0=0$ which does not belong to $\mathcal{G}_H^d$ for any $H>0$.
\end{prop}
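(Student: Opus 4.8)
The plan is to produce the counterexample by prescribing the \emph{increments} directly and making their common tail so heavy that no exponential weight can control them. Writing $\xi_j := \Delta G_j = G_j - G_{j-1}$, I would take $(\xi_j)_{j\in\Z}$ to be i.i.d.\ and then set $G_0=0$, $G_n=\sum_{j=1}^n\xi_j$ for $n\ge 1$ and $G_n=-\sum_{j=n+1}^0\xi_j$ for $n<0$. Any such $G$ automatically has stationary increments and satisfies $G_0=0$, so membership in $\mathcal{G}_H^d$ hinges solely on whether the series
$$
\lim_{k\to-\infty}\sum_{j=k}^0 e^{jH}\xi_j \;=\; \sum_{m=0}^\infty e^{-mH}\xi_{-m}
$$
converges to an almost surely finite limit. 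My goal is thus to choose the law of $\xi_0$ so that this series diverges for \emph{every} $H>0$.

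The key simplification is that a convergent real series has general term tending to $0$; hence it suffices to show that $e^{-mH}|\xi_{-m}|\not\to 0$ almost surely for each fixed $H>0$. I would engineer this through the tail of $\xi_0$, choosing its law so that $\E\left[\log^+|\xi_0|\right]=\infty$. A concrete choice is a positive variable with $\P(|\xi_0|>x)=1/\log x$ for $x\ge e$, for which $\E\left[\log^+|\xi_0|\right]=\int_0^\infty\P(|\xi_0|>e^t)\,\ud t=\infty$.

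Next I would convert this integrability failure into a divergent Borel--Cantelli sum. Since $t\mapsto\P\left(\log^+|\xi_0|>t\right)$ is non-increasing, the integral comparison gives, for every $H>0$,
$$
\sum_{m=0}^\infty \P\left(|\xi_{-m}|>e^{mH}\right)=\sum_{m=0}^\infty \P\left(\log^+|\xi_0|>mH\right)\ge \frac{1}{H}\int_0^\infty \P\left(\log^+|\xi_0|>t\right)\ud t=\frac{1}{H}\,\E\left[\log^+|\xi_0|\right]=\infty.
$$
Because the events $\{|\xi_{-m}|>e^{mH}\}_{m\ge 0}$ are independent, the second Borel--Cantelli lemma yields $|\xi_{-m}|>e^{mH}$ infinitely often almost surely, so $e^{-mH}|\xi_{-m}|>1$ infinitely often and the general term cannot converge to $0$. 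Therefore the defining series diverges almost surely; as $H>0$ was arbitrary, $G\notin\mathcal{G}_H^d$ for any $H$, which is the claim.

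I do not anticipate a serious obstacle, since the construction is essentially forced once one notices that failure of the $\mathcal{G}_H^d$ condition can be read off from the general term of the series alone. The only points demanding care are that the converse Borel--Cantelli lemma requires \emph{independence} of the increments (this is precisely why I take them i.i.d.\ rather than merely stationary), and that the integral comparison must be set up with a non-increasing integrand so that the lower bound by $\tfrac{1}{H}\E[\log^+|\xi_0|]$ is legitimate. I would also note in passing that the indexing should be understood over $\Z$ for the two-sided series to make sense.
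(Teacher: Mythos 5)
Your proof is correct and takes essentially the same approach as the paper: both construct a two-sided process with i.i.d.\ increments whose common law has $\E\left[\log^+|\Delta G_1|\right]=\infty$, and both conclude divergence of the weighted series via a converse Borel--Cantelli-type argument exploiting independence (the paper phrases this through Kolmogorov's three-series theorem and uses $e^{\xi}$ with $\xi$ Pareto of index $\alpha\in(0,1)$, while you use a reciprocal-logarithmic tail and the second Borel--Cantelli lemma directly, which are only cosmetic differences).
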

\begin{proof}
We construct an example of process $G_n$ such that 
$$
\lim_{k\to \infty} \sum_{j=0}^{k} e^{-Hj} \Delta G_j = \infty
$$
almost surely. 
Let $\alpha\in(0,1)$ and let $\xi_k$ be i.i.d. sequence of Pareto$(\alpha)$ random variables with tail function
$
\P (\xi_k > x) = x^{-\alpha}, \quad x>1.
$
Set
$
Z_k = e^{\xi_k}
$
and define
$
G_n = \sum_{k=1}^n Z_k.
$
We also set $G_0=0$. Now by defining two-sided process it is straightforward that $G_n$ has stationary increments.
On the other hand, now 
$$
\sum_{j=0}^{k} e^{-Hj} \Delta G_j = \sum_{j=0}^k e^{-Hj}Z_j,
$$
and since for any fixed number $A$ we have $\sum_{n=1}^\infty \P(e^{-Hn}Z_n >A) = \infty$, the series $\sum_{j=0}^k e^{-Hj}Z_j$ diverges with probability one 
by Kolmogorov's three series theorem \cite[IX.9, Theorem 3]{feller}.
\end{proof}

\section{Representation of stationary processes via Langevin equation}
\label{sec:main}
We begin with the following simple lemma.
\begin{lma}
\label{lemma_easy}
Let $X=(X_t)_{t\geq 0}$ be $H$-self-similar. Then the process
$$
Y_t = \int_0^t e^{-Hs}\ud X_{e^s}, \quad t\in \R,
$$
satisfies $Y \in \mathcal{G}_H$.
\end{lma}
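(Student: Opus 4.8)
The plan is to verify the three defining properties of $\mathcal{G}_H$ for $Y$: that $Y_0=0$, that $Y$ has stationary increments, and that $\int_{-\infty}^0 e^{Hu}\ud Y_u$ exists and is almost surely finite. The first is immediate since $Y_0=\int_0^0 e^{-Hs}\ud X_{e^s}=0$. The backbone of the argument is a convenient rewriting of $Y$. Applying the integration by parts formula from Section~\ref{sec:pre} to $Y_t=\int_0^t e^{-Hs}\ud Z_s$ with $Z_s=X_{e^s}$ (i.e.\ with $H$ replaced by $-H$), I obtain
$$
Y_t = U_t - U_0 + H\int_0^t U_s\,\ud s, \qquad U_s:=e^{-Hs}X_{e^s}=(\mathcal{L}_H^{-1}X)_s .
$$
By Lamperti's theorem (Theorem~\ref{thm:lamperti}) the process $U$ is stationary, and this is what I would exploit throughout.

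For the stationary increments, I would fix $h\in\R$ and compute, using the display above,
$$
Y_{t+h}-Y_h = U_{t+h}-U_h + H\int_0^t U_{h+v}\,\ud v = F\big(U^{(h)}\big)_t,\qquad U^{(h)}_v:=U_{h+v},
$$
where $F(W)_t:=W_t-W_0+H\int_0^t W_v\,\ud v$ is a fixed pathwise functional and $Y_t=F(U)_t$. Since $U$ is stationary, $U^{(h)}\law U$ as processes, so applying the same measurable functional $F$ gives $(Y_{t+h}-Y_h)_t\law (Y_t)_t$, which is exactly stationarity of the increments of $Y$.

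The crucial point is the integrability condition, and here the key simplification is the change of variables $w=e^u$: since $\ud Y_u=e^{-Hu}\ud X_{e^u}$, one expects $e^{Hu}\ud Y_u=\ud X_{e^u}$ to telescope, and indeed a direct integration by parts confirms the clean identity $\int_a^b e^{Hu}\ud Y_u = X_{e^b}-X_{e^a}$ for $a<b$. Taking $a=-T$, $b=0$ gives $\int_{-T}^0 e^{Hu}\ud Y_u=X_1-X_{e^{-T}}$, which converges to $X_1$ as $T\to\infty$ by continuity of $X$ at the origin together with $X_0=0$. To match the definition of the indefinite integral in Section~\ref{sec:pre}, which requires the Riemann integral $\int_{-\infty}^0 e^{Hu}Y_u\,\ud u$ to converge, I would combine this identity with integration by parts on $[-T,0]$ to obtain $H\int_{-T}^0 e^{Hu}Y_u\,\ud u=-e^{-HT}Y_{-T}-(X_1-X_{e^{-T}})$, reducing everything to showing that the boundary term $e^{-HT}Y_{-T}$ tends to $0$.

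The main obstacle is precisely this boundary term, because the decay $e^{-HT}\to0$ must compete with the potentially large $Y_{-T}$. I would control it through the representation of $Y$: one has $e^{-HT}U_{-T}=X_{e^{-T}}\to0$ and $e^{-HT}U_0\to0$, while after the substitution $w=e^v$ the remaining piece becomes $He^{-HT}\int_{e^{-T}}^1 w^{-H-1}X_w\,\ud w$. Although the weight $w^{-H-1}$ is non-integrable at the origin, splitting the integral at a small $\delta$ and using $|X_w|\le\varepsilon$ for $w\le\delta$ (again continuity of $X$ at $0$) shows this term is $O(\varepsilon)$ uniformly in $T$ after multiplication by $e^{-HT}$, hence vanishes in the limit. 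Consequently $\int_{-\infty}^0 e^{Hu}Y_u\,\ud u=-X_1/H$ and $\int_{-\infty}^0 e^{Hu}\ud Y_u=X_1$, which is almost surely finite, so $Y\in\mathcal{G}_H$.
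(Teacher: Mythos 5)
Your proof is correct, but it takes a noticeably different route from the paper's. For the stationary increments, the paper works directly with the self-similarity of $X$: it writes $Y_t-Y_s=\int_{s+h}^{t+h}e^{-H(v-h)}\ud X_{e^{v-h}}$ and invokes the scaling $\ud X_{e^{-h}e^v}\law e^{-hH}\ud X_{e^v}$ inside the Riemann-sum approximation of the integral. You instead pass through Lamperti's theorem: you integrate by parts to write $Y_t=F(U)_t$ with $U=\mathcal{L}_H^{-1}X$ stationary and $F$ a fixed pathwise functional, and then use $U^{(h)}\law U$. Your version avoids the somewhat informal manipulation of the "differential" $\ud X_{e^{v-h}}$ and replaces it with a clean measurable-functional argument (valid here because the paths are continuous, so equality of finite-dimensional distributions transfers to the Riemann integrals appearing in $F$); the cost is that you lean on Theorem \ref{thm:lamperti}, whereas the paper's lemma is self-contained modulo the definition of the integral. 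For the integrability condition the paper simply asserts the telescoping $\int_{-\infty}^0 e^{Hs}\ud Y_s=\int_{-\infty}^0\ud X_{e^s}=X_1$, while you actually verify that the improper integral exists in the sense the paper defines it, i.e.\ that $\int_{-\infty}^0 e^{Hu}Y_u\,\ud u$ converges, by controlling the boundary term $e^{-HT}Y_{-T}$ via continuity of $X$ at $0$. That extra care is a genuine improvement: the paper's one-line assertion quietly presupposes exactly the convergence you prove. Both approaches establish the lemma; yours is slightly less elementary in its inputs but more rigorous at the two points where the paper is terse.
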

\begin{proof}
Clearly, $Y_0=0$ and 
$
\int_{-\infty}^0 e^{Hs}\ud Y_t = \int_{-\infty}^0 \ud X_{e^s} = X_1.
$
Furthermore, for any $t,s,h\in\R$ we have
$$
Y_t - Y_s = \int_s^t e^{-Hu}\ud X_{e^u}
= \int_{s+h}^{t+h}e^{-H(v-h)}\ud X_{e^{v-h}}
$$
and by self-similarity of $X$, we have $
\ud X_{e^{-h}e^v} \law  e^{-hH}\ud X_{e^v}
$. 
Since integrals are defined as Riemann-integrals, we get immediately that $$Y_t - Y_s \law \int_{s+h}^{t+h}e^{-Hv}\ud X_{e^v} = Y_{t+h} - Y_{s+h}.$$ Treating 
$n$-dimensional vectors similarly proves the claim.
\end{proof}
Consider now the Langevin dynamics 
\begin{equation}
\label{langevin_dynamics}
\ud U_t = -H U_t \ud t + \ud G_t,\quad t\in \R.
\end{equation}
The unique solution can be expressed as 
$$
U_t = e^{-Ht}\left(U_0 + \int_0^t e^{Hs}\ud G_s\right),\quad t\in\R.
$$
In particular, if $G\in \mathcal{G}_H$, then for initial condition  
\begin{equation}
\label{eq:initial}
U_0 = \int_{-\infty}^0 e^{Hs}\ud G_s
\end{equation}
we get the solution
\begin{equation}
\label{U_def}
U_t = \e^{-H t}\int_{-\infty}^t e^{H s}\ud G_s, \quad t\in\R,
\end{equation}
and this process is stationary by the stationary increments of $G$. 
The main result of this paper shows that this fact is both necessary and sufficient for stationary processes.
\begin{thm}
\label{thm-main}
Let $H>0$ be fixed. A process $U=(U_t)_{t\in\R}$ is stationary if and only 
if it can be expressed as the unique solution to Langevin equation with some noise $G\in\mathcal{G}_H$ and initial condition \eqref{eq:initial}, i.e. 
\begin{equation}
\label{rep:U_G}
U_t = \e^{-H t}\int_{-\infty}^t e^{H s}\ud G_s,\quad t\in \R.
\end{equation}
Furthermore, the process $G\in\mathcal{G}_H$ in this representation is unique. 
\end{thm}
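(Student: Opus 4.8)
The plan is to establish the three claims—sufficiency, necessity, and uniqueness of the representation—separately, using Lamperti's Theorem \ref{thm:lamperti} together with Lemma \ref{lemma_easy}. Sufficiency is the direction already recorded just before the statement: if $U_t = e^{-Ht}\int_{-\infty}^t e^{Hs}\ud G_s$ with $G\in\mathcal{G}_H$, then $U$ is stationary. I would make this precise by performing, for a fixed shift $h\in\R$, the change of variable $s\mapsto v+h$ in the defining integral, which yields $U_{t+h} = e^{-Ht}\int_{-\infty}^t e^{Hv}\ud G_{v+h}$. Since the increments of $v\mapsto G_{v+h}$ coincide with those of $v\mapsto G_{v+h}-G_h$, and the latter process has the same finite-dimensional distributions as $G$ by stationarity of increments (and $G_0=0$), the integral functionals agree in law jointly over finitely many times; hence $(U_{t_1+h},\dots,U_{t_n+h})\law(U_{t_1},\dots,U_{t_n})$, which is stationarity.

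For necessity, let $U$ be stationary and put $X=\mathcal{L}_H U$, an $H$-self-similar process by Theorem \ref{thm:lamperti}. I would then set $G_t=\int_0^t e^{-Hs}\ud X_{e^s}$, so that $G\in\mathcal{G}_H$ by Lemma \ref{lemma_easy}. The heart of this direction is the identity $\int_{-\infty}^t e^{Hs}\ud G_s = X_{e^t}$. I would derive it first on a finite interval, where $\ud G_s=e^{-Hs}\ud X_{e^s}$ gives $\int_r^t e^{Hs}\ud G_s=\int_r^t \ud X_{e^s}=X_{e^t}-X_{e^r}$, and then let $r\to-\infty$, using continuity of $X$ and $X_0=0$ to send $X_{e^r}\to 0$. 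Multiplying by $e^{-Ht}$ then gives $e^{-Ht}X_{e^t}=(\mathcal{L}_H^{-1}X)_t=(\mathcal{L}_H^{-1}\mathcal{L}_H U)_t=U_t$, which is exactly \eqref{rep:U_G}.

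For uniqueness, suppose $G,\tilde G\in\mathcal{G}_H$ both satisfy \eqref{rep:U_G}. Splitting the indefinite integral as $\int_s^t e^{Hu}\ud G_u=\int_{-\infty}^t e^{Hu}\ud G_u-\int_{-\infty}^s e^{Hu}\ud G_u=e^{Ht}U_t-e^{Hs}U_s$, and likewise for $\tilde G$, I find that $D=G-\tilde G$ satisfies $D_0=0$ and $\int_s^t e^{Hu}\ud D_u=0$ for all $s<t$. Taking $s=0$ and applying the integration-by-parts formula of Section \ref{sec:pre} gives $e^{Ht}D_t=H\int_0^t e^{Hu}D_u\,\ud u$. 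Setting $F(t)=\int_0^t e^{Hu}D_u\,\ud u$, this is the linear ODE $F'(t)=HF(t)$ with $F(0)=0$, whose only solution is $F\equiv 0$; hence $e^{Ht}D_t\equiv 0$ and $G=\tilde G$.

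I expect the main technical care to sit in the necessity step, namely in justifying the pathwise passage $\int_r^t e^{Hs}\ud G_s=\int_r^t\ud X_{e^s}$ together with the convergence $X_{e^r}\to X_0=0$ as $r\to-\infty$, entirely within the Riemann/integration-by-parts framework of Section \ref{sec:pre}; one also has to check that $\int_{-\infty}^t e^{Hs}\ud G_s$ exists for every $t$, which follows by adding the finite Riemann integral $\int_0^t$ to the convergent tail guaranteed by $G\in\mathcal{G}_H$. The sufficiency change of variables and the uniqueness ODE argument are comparatively routine once the indefinite integrals are known to exist.
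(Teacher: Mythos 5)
Your proposal is correct and follows essentially the same route as the paper: necessity via the Lamperti transform $X=\mathcal{L}_H U$ and the noise $G_t=\int_0^t e^{-Hs}\ud X_{e^s}$ from Lemma \ref{lemma_easy}, sufficiency from the stationary increments of $G$, and uniqueness via integration by parts reducing to a linear ODE with zero initial condition. Your variants (deriving \eqref{rep:U_G} in integrated rather than differential form, and working with $F(t)=\int_0^t e^{Hu}D_u\,\ud u$ instead of $g(t)=e^{Ht}(G^1_t-G^2_t)$) are only cosmetic differences, and the technical point you flag about $X_{e^r}\to 0$ as $r\to-\infty$ is glossed over to the same extent in the paper itself.
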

\begin{proof}
Define a process $X = \mathcal{L}_{H}U$. Now since $U$ is stationary, it follows by Lamperti theorem that $X$ is $H$-self-similar. Moreover, from $U_t = e^{-Ht}X_{e^t}$ we deduce 
$$
\ud U_t = -H U_t \ud t + e^{-H t}\ud X_{e^t}.
$$
By defining a process $Y=(Y_t)_{t\in\R}$ by
$$
Y_t = \int_{0}^t e^{-H s}\ud X_{e^s} 
$$
we have
$$
\ud U_t = - H U_t \ud t + \ud Y_t,
$$
and $Y \in \mathcal{G}_H$ by Lemma \ref{lemma_easy}. Conversely, for any $G\in\mathcal{G}_H$ the process \eqref{rep:U_G} corresponding to unique solution with initial 
condition \eqref{eq:initial}
is stationary. To show the uniqueness, let $H$ be fixed and assume there exists two processes $G^1\in\mathcal{G}_H$ and $G^2\in\mathcal{G}_H$ which yields same solution 
$U$ applied to (\ref{langevin_dynamics}). Hence
$$
e^{Ht}U_t = \int_{-\infty}^t e^{Hu}\ud G_u^1 = \int_{-\infty}^t e^{Hu}\ud G_u^2.
$$
In particular, for every $s<t$ we have
$$
\int_{s}^t e^{Hu}\ud G_u^1 = \int_s^t e^{Hu}\ud G_u^2
$$
which together with integration by parts yields
$$
e^{Ht}(G_t^1 - G_t^2) - e^{Hs}(G_s^1 - G_s^2) = H\int_s^t e^{Hu}(G_u^1-G_u^2)\ud u.
$$
Denoting $g(t) =e^{Ht}(G_t^1 - G_t^2)$ we obtain that for each fixed $s$ we have
$$
g(t) - g(s) = \int_s^t Hg(u)\ud u.
$$
Now it is well-known that the only solution to such equation is 
$
g(t) = Ce^{Ht}
$
which implies that there exists a constant $c$ such that $G_t^1 - G_t^2=c$ for every $t$. Applying this with $t=0$ we obtain $c=G_0^1 - G_0^2 = 0$. 
\end{proof}
As an immediate corollary we obtain the following result concerning existence of stationary solutions to Langevin equation.
\begin{cor}
\label{cor-existence}
For any $H>0$ the Langevin equation \eqref{langevin_dynamics} with a noise process $G=(G_t)_{t\in\R}$ satisfying $G_0=0$ 
has a stationary solution if and only if $G\in \mathcal{G}_H$. Furthermore, the stationary solution is unique in law.
\end{cor}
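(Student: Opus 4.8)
The plan is to read off both assertions directly from Theorem \ref{thm-main} with essentially no extra work. The sufficiency of the condition $G\in\mathcal{G}_H$ has in fact already been recorded in the discussion preceding that theorem: for $G\in\mathcal{G}_H$ the choice of initial value \eqref{eq:initial} produces the stationary solution \eqref{U_def}. Thus only the necessity and the uniqueness in law remain to be addressed.

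For necessity I would start from an arbitrary stationary solution $U$ of \eqref{langevin_dynamics} and recover its driving noise from the path of $U$. Since $U$ solves $\ud U_t=-HU_t\,\ud t+\ud G_t$ with $G_0=0$, rearranging and integrating from $0$ to $t$ gives the pathwise identity
\[
G_t = (U_t-U_0) + H\int_0^t U_s\,\ud s,\qquad t\in\R,
\]
so that $G$ is determined by $U$ alone. On the other hand, because $U$ is stationary, Theorem \ref{thm-main} supplies a process $\tilde G\in\mathcal{G}_H$ for which $U$ is the solution of the Langevin equation driven by $\tilde G$; applying the very same rearrangement to this representation, and using $\tilde G_0=0$, yields exactly the same formula for $\tilde G$ in terms of $U$. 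Hence $G=\tilde G\in\mathcal{G}_H$, which is the desired conclusion.

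For uniqueness in law I would argue directly at the pathwise level, which is stronger. If $U$ and $V$ are two stationary solutions of \eqref{langevin_dynamics} driven by the same $G$, then the difference solves the homogeneous equation $\ud(U_t-V_t)=-H(U_t-V_t)\,\ud t$, whence $U_t-V_t=(U_0-V_0)\e^{-Ht}$. Letting $t\to-\infty$ the right-hand side blows up unless $U_0=V_0$ almost surely, whereas stationarity forces $U_t-V_t$ to remain tight uniformly in $t$; this contradiction gives $U_0=V_0$ and therefore $U\equiv V$. Alternatively, one may simply invoke the uniqueness clause of Theorem \ref{thm-main}, which already singles out \eqref{rep:U_G} as the only stationary process compatible with a given admissible noise.

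The individual steps are routine; the one point that needs care is the identification $G=\tilde G$ in the necessity argument. The subtlety is that Theorem \ref{thm-main} a priori only guarantees the existence of \emph{some} admissible noise $\tilde G$ reproducing $U$, so I must ensure that the noise recovered from the given solution coincides with it rather than merely agreeing in distribution. This is exactly what the displayed deterministic recovery formula secures: both noises are obtained from the same path of $U$ by the identical operation together with the normalisation $G_0=\tilde G_0=0$ at the origin.
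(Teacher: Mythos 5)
Your proof is correct, and for the equivalence it follows the paper's own route exactly: sufficiency is read off from \eqref{rep:U_G}, and necessity is obtained by feeding the stationary solution into Theorem \ref{thm-main} to produce some admissible noise $Y\in\mathcal{G}_H$ and then identifying $Y$ with the given $G$. Your displayed recovery formula $G_t=(U_t-U_0)+H\int_0^t U_s\,\ud s$ makes explicit the step that the paper compresses into ``this implies $G=Y$ almost surely,'' which is a welcome clarification rather than a deviation. The only genuine difference is the uniqueness clause: the paper simply defers to the proof of Theorem 2.1 in \cite{nie-con}, whereas you give a self-contained argument — the difference of two stationary solutions satisfies the homogeneous equation, hence equals $(U_0-V_0)\e^{-Ht}$, which is incompatible with the tightness of $\{U_t-V_t\}_{t\in\R}$ as $t\to-\infty$ unless $U_0=V_0$ almost surely. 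This yields pathwise uniqueness for a fixed noise path, which is stronger than the stated uniqueness in law and removes the external dependency; both versions of the argument are sound.
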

\begin{proof}
If $G\in \mathcal{G}_H$, then the existence of stationary solution follows from \eqref{rep:U_G}. Suppose next that a stationary solution exists with some 
initial value $U_0$ for the Langevin equation driven by $G$ satisfying $G_0=0$. 
Then by Theorem \ref{thm-main} the solution $U$ satisfies $\ud U_t = - H U_t \ud t + \ud Y_t$ for some $Y \in \mathcal{G}_H$. This implies $G=Y$ almost surely, 
and hence $G\in \mathcal{G}_H$. To conclude, the uniqueness in law can be deduced as in the proof of Theorem 2.1 in \cite{nie-con}.
\end{proof}
\begin{rmk}
The above result partially answers to the question raised in \cite{nie-con} whether logarithmic integrability is sufficient 
instead of $\E|G_t|^p < \infty, \quad t\geq 0$ for some $p\geq 1$ to quarantee the existence of stationary solution to the Langevin equation \eqref{langevin_dynamics}. 
Indeed, using Theorem \ref{lma:extension} we obtain that for any stationary increment process $G$ satisfying $G_0=0$ and 
\eqref{eq:integrability} the Langevin equation \eqref{langevin_dynamics} has a stationary solution.
\end{rmk}
\begin{cor}
\label{thm:representations}
Let $H>0$ be fixed. \begin{enumerate}
\item A process $X=(X_t)_{t\geq 0}$ is $H$-self-similar if and only if 
\begin{equation}
\label{ss-process}
X_t = \int_{-\infty}^{\log t} e^{Hs}\ud G_s, \quad t>0
\end{equation}
for some $G\in\mathcal{G}_H$. Furthermore, the process $G$ in the representation is unique.
\item A process $G =(G_t)_{t\in \R}$ satisfies $G\in\mathcal{G}_H$ if and only if it admits a representation
\begin{equation}
\label{si-rep}
G_t = \int_0^t e^{-Hu}\ud X_{e^u},\quad t\in\R
\end{equation}
for some $H$-self-similar process $X$. Furthermore, the process $X$ in this representation is unique.
\end{enumerate}
\end{cor}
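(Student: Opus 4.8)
The plan is to deduce both statements from the Lamperti correspondence (Theorem \ref{thm:lamperti}) together with Theorem \ref{thm-main} and Lemma \ref{lemma_easy}. The two representations \eqref{ss-process} and \eqref{si-rep} are built from the maps
$$
\Psi\colon G\longmapsto\Big(t\mapsto\int_{-\infty}^{\log t}e^{Hs}\ud G_s\Big),
\qquad
\Phi\colon X\longmapsto\Big(t\mapsto\int_0^t e^{-Hu}\ud X_{e^u}\Big),
$$
and the whole corollary amounts to showing that $\Psi$ and $\Phi$ are mutually inverse bijections between $\mathcal{X}_H$ and $\mathcal{G}_H$: item (i) reads off that $\Psi$ maps $\mathcal{G}_H$ bijectively onto $\mathcal{X}_H$, and item (ii) that $\Phi$ maps $\mathcal{X}_H$ bijectively onto $\mathcal{G}_H$, the two uniqueness claims being the injectivity halves.

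For item (i), first I would set $U=\mathcal{L}_H^{-1}X$, so that by Theorem \ref{thm:lamperti} the process $X$ is $H$-self-similar exactly when $U$ is stationary. Theorem \ref{thm-main} then rephrases stationarity of $U$ as the existence of a unique $G\in\mathcal{G}_H$ with $U_t = e^{-Ht}\int_{-\infty}^t e^{Hs}\ud G_s$. Finally, writing $X_t = t^H U_{\log t}$ (that is, $X=\mathcal{L}_H U$) and cancelling the factor $t^{H}$ against $e^{-H\log t}=t^{-H}$ turns this into $X_t = \int_{-\infty}^{\log t}e^{Hs}\ud G_s$, i.e. $X=\Psi(G)$; the uniqueness of $G$ is inherited verbatim from Theorem \ref{thm-main}.

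For item (ii), the ``if'' direction is precisely Lemma \ref{lemma_easy}, which states that $\Phi(X)\in\mathcal{G}_H$ for every $H$-self-similar $X$. For the ``only if'' direction, given $G\in\mathcal{G}_H$ I would take $X=\Psi(G)$, so that $X_{e^u}=\int_{-\infty}^u e^{Hs}\ud G_s$, and check $\Phi(X)=G$: the differential identity $\ud X_{e^u}=e^{Hu}\ud G_u$ gives $\int_0^t e^{-Hu}\ud X_{e^u}=\int_0^t\ud G_u = G_t$ using $G_0=0$, so $\Phi\circ\Psi=\mathrm{id}$. Running the same identity in the opposite direction --- now $e^{Hs}\ud G_s=\ud X_{e^s}$ with $G=\Phi(X)$ --- telescopes $\int_{-\infty}^{\log r}e^{Hs}\ud G_s$ to $X_r-\lim_{s\to-\infty}X_{e^s}=X_r$, giving $\Psi\circ\Phi=\mathrm{id}$. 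Thus $\Phi$ and $\Psi$ are mutually inverse, which supplies the uniqueness of $X$ in item (ii).

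The routine but genuinely delicate point, and the step I expect to be the main obstacle, is justifying the differential manipulations $e^{Hs}\ud G_s=\ud X_{e^s}$ and the telescoping $\int_{-\infty}^{\log r}\ud X_{e^s}=X_r-\lim_{s\to-\infty}X_{e^s}$ strictly within the pathwise Riemann framework of Section \ref{sec:pre}. One must check that the indefinite integral defining $\Psi(G)$ exists (guaranteed by $G\in\mathcal{G}_H$), that the boundary contribution at $-\infty$ vanishes because $X_0=0$ and $X$ is continuous, so that $X_{e^s}\to X_{0+}=0$, and that the integration-by-parts definition is stable under the change of variable $t=e^u$. These are bookkeeping checks of the same type already performed in Lemma \ref{lemma_easy} and Theorem \ref{thm-main}, and require no new idea.
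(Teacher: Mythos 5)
Your proposal is correct and follows essentially the same route as the paper: item (i) is obtained by combining the Lamperti correspondence with Theorem \ref{thm-main}, and item (ii) uses Lemma \ref{lemma_easy} for one direction and the construction $X=\mathcal{L}_H U$ (equivalently your $\Psi(G)$) for the other. The only cosmetic difference is that where the paper concludes $Y=G$ by invoking the uniqueness clause of Theorem \ref{thm-main}, you verify $\Phi\circ\Psi=\mathrm{id}$ and $\Psi\circ\Phi=\mathrm{id}$ by the direct change-of-variables computation, which rests on the same pathwise integration-by-parts identities already used in Lemma \ref{lemma_easy}.
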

\begin{proof}
\begin{enumerate}
\item By Theorem \ref{thm-main} there is one-to-one correspondence between stationary processes and processes $G\in \mathcal{G}_H$. 
Hence this item follows by setting $X_t = (\mathcal{L}_H U)_t = t^H U_{\log t}$ together with Lamperti theorem \ref{thm:lamperti}.
\item The fact that the process defined by \eqref{si-rep} belongs to $\mathcal{G}_H$ is 
the statement of Lemma \ref{lemma_easy}. For the converse, suppose $G\in\mathcal{G}_H$ and let $U$ be the unique stationary process arising from Langevin equation. By setting $X = \mathcal{L}_{H}U$ we have $
U_t = (\mathcal{L}_H^{-1}X)_t = e^{-Ht}X_{e^t}
$
and consequently, $U$ satisfies $\ud U_t = -HU_t \ud t + \ud Y_t$ 
with 
$Y_t = \int_0^t e^{-Hu} \ud X_{e^u}$. 
This implies $Y=G$ almost surely which concludes the proof.
\end{enumerate}
\end{proof}
We end this section by providing the following result for processes on $[0,\infty)$.
\begin{prop}
Let $U=(U_t)_{t\geq 0}$ be a stationary process and let $H>0$ be fixed. Then $U$ is a solution to the Langevin equation
\begin{equation}
\label{eq:langevin_once_more}
\ud U_t = - H U_t \ud t + \ud G_t, \quad t\geq 0
\end{equation}
with some stationary increment noise $G=(G_t)_{t\geq 0}$. 
Conversely, if there exists a two-sided process $\tilde{G}=(\tilde{G}_t)_{t\in\R} \in \mathcal{G}_H$ such that $\left(\tilde{G}_t\right)_{t\geq 0}\law (G_t)_{t\geq 0}$, 
then the Langevin equation \eqref{eq:langevin_once_more} has
a stationary solution with suitably chosen initial condition. In this case, the stationary 
solution is unique in law. 
\end{prop}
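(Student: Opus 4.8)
The plan is to treat the two directions separately and to dispose of the uniqueness claim by a direct comparison of candidate solutions.

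For the forward direction I would produce the noise $G$ explicitly by rearranging the Langevin equation. Setting
$$
G_t = U_t - U_0 + H\int_0^t U_s\,\ud s, \quad t\geq 0,
$$
one has $G_0=0$ and, by construction, $\ud U_t = -HU_t\,\ud t + \ud G_t$, so that $U$ solves \eqref{eq:langevin_once_more} with this $G$. It then remains to check that $G$ has stationary increments. For $h\geq 0$ and $0\leq s<t$ I would write
$$
G_{t+h}-G_{s+h} = (U_{t+h}-U_{s+h}) + H\int_s^t U_{v+h}\,\ud v,
$$
and observe that the right-hand side is a fixed measurable functional applied to the shifted path $(U_{v+h})_{v\geq0}$. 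Since $U$ is stationary, the shifted path has the same law as $U$ itself, and applying the same reasoning jointly to a finite collection of increments yields that $G$ has stationary increments. Note that $G$ need not lie in $\mathcal{G}_H$, since no integrability over the infinite past is available.

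For the converse I would use the two-sided extension together with Theorem \ref{thm-main}. By assumption there is $\tilde G\in\mathcal{G}_H$ with $(\tilde G_t)_{t\geq0}\law(G_t)_{t\geq0}$, and Theorem \ref{thm-main} provides the two-sided stationary solution $\tilde U_t = e^{-Ht}\int_{-\infty}^t e^{Hs}\,\ud\tilde G_s$ with initial value $\tilde U_0=\int_{-\infty}^0 e^{Hs}\,\ud\tilde G_s$; its restriction to $[0,\infty)$ is stationary and solves the Langevin equation driven by $(\tilde G_t)_{t\geq0}$. The key point is that on $[0,\infty)$ the solution is the measurable functional $\tilde U_t = e^{-Ht}(\tilde U_0 + \int_0^t e^{Hs}\,\ud\tilde G_s)$ of the pair $(\tilde U_0,(\tilde G_t)_{t\geq0})$. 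I would therefore choose the initial condition so as to reproduce the correct joint law: letting $K(\cdot\mid\cdot)$ be a regular conditional distribution of $\tilde U_0$ given $(\tilde G_t)_{t\geq0}$ (which exists since the path space is Polish), I would construct on a possibly enlarged probability space a random variable $U_0$ whose conditional law given $(G_t)_{t\geq0}$ is $K(\cdot\mid(G_t)_{t\geq0})$. Because $(G_t)_{t\geq0}\law(\tilde G_t)_{t\geq0}$, this gives $(U_0,(G_t)_{t\geq0})\law(\tilde U_0,(\tilde G_t)_{t\geq0})$, and hence the process $U_t=e^{-Ht}(U_0+\int_0^t e^{Hs}\,\ud G_s)$, being the same functional, satisfies $(U_t)_{t\geq0}\law(\tilde U_t)_{t\geq0}$. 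In particular $U$ is stationary and solves \eqref{eq:langevin_once_more}.

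For uniqueness in law I would compare any two stationary solutions $U$ and $V$ driven by the same $G$. From the solution formula, $U_t-V_t = e^{-Ht}(U_0-V_0)$, and since $H>0$ and $U_0-V_0$ is almost surely finite, this difference tends to $0$ almost surely as $t\to\infty$. Fixing times $t_1,\dots,t_n$ and frequencies $\xi_1,\dots,\xi_n$, stationarity gives that $\E[\exp(i\sum_{j}\xi_j U_{t_j+h})]$ and $\E[\exp(i\sum_{j}\xi_j V_{t_j+h})]$ do not depend on $h$; writing $U_{t_j+h}=V_{t_j+h}+e^{-H(t_j+h)}(U_0-V_0)$ and letting $h\to\infty$, dominated convergence forces these two characteristic functions to coincide, so $U$ and $V$ share all finite-dimensional distributions. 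The main obstacle I anticipate is precisely the initial condition in the converse: the value $\tilde U_0$ depends on the whole unobserved past $(\tilde G_s)_{s\leq0}$, and the regular conditional distribution step is what reconstructs an admissible $U_0$ from the one-sided noise together with the assumed two-sided extension; the remaining verifications (existence of the pathwise integrals, measurability of the functionals, and the dominated convergence estimates) are routine under the standing continuity assumption.
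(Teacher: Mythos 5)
Your proof is correct, but it takes a genuinely different route from the paper's in all three steps, generally by being more explicit. For the forward direction the paper goes through the Lamperti transform: it sets $X_t = t^H U_{\log t}$, notes that $X$ is $H$-self-similar on $[1,\infty)$, and takes $G_t = \int_0^t e^{-Hs}\,\ud X_{e^s}$, with stationarity of the increments supplied by the argument of Lemma \ref{lemma_easy}. Your $G_t = U_t - U_0 + H\int_0^t U_s\,\ud s$ is in fact the same process, since $e^{-Hs}\,\ud X_{e^s} = e^{-Hs}\,\ud\!\left(e^{Hs}U_s\right) = \ud U_s + HU_s\,\ud s$; but your verification of stationary increments directly from the shift-invariance of the law of $U$ is more elementary and bypasses self-similarity entirely, at the cost of losing the structural link to self-similar processes that the paper is developing. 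For the converse the paper is terse: it observes that the two-sided solution driven by $\tilde G$ is stationary and simply asserts that equality in law of driving forces transfers to equality in law of solutions, leaving the construction of the ``suitably chosen initial condition'' for the given one-sided $G$ implicit. Your regular-conditional-distribution construction of $U_0$ on an enlarged space is precisely the missing detail and is a genuine gain in rigour; you should just record explicitly that the resulting $U$ is stationary because its finite-dimensional distributions coincide with those of the restriction of the two-sided stationary solution. For uniqueness in law the paper defers to Theorem 2.1 of \cite{nie-con}; your characteristic-function argument based on $U_t - V_t = e^{-Ht}(U_0 - V_0)\to 0$ is the standard one and is correct, with the minor caveat that it presumes the two stationary solutions are realised on the same probability space with the same noise path, which is the natural reading of the uniqueness claim.
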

\begin{proof}
If $U=(U_t)_{t\geq 0}$ is stationary, then $X_t:=\left(\mathcal{L}_H U\right)_t = t^H U_{\log t}$ defines an $H$-self-similar process 
on $[1,\infty)$, and $U_t = e^{-Ht}X_{e^t}$. 
Consequently, $U$ satisfies \eqref{eq:langevin_once_more} with $G_t = \int_0^t e^{-Hs}\ud X_{e^s}$. Conversely, if the two-sided process $\tilde{G}\in \mathcal{G}_H$ exists, 
then the solution \eqref{rep:U_G} with $\tilde{G}$ defines a stationary process on whole $\R$ which satisfies Langevin equation with a driving force $\tilde{G}$. 
The existence of stationary solution follows from the fact that if driving forces $G^1$ and $G^2$ are equal in law, then so is 
solutions to corresponding equations \eqref{eq:langevin_once_more}.
\end{proof}
\subsection{Analogy in discrete time}
\label{sec:discrete}
Let $G\in \mathcal{G}^d_H$. Then the process defined by
\begin{equation}
\label{rep_U_discrete}
U_n = e^{-Hn}\sum_{k=-\infty}^n e^{Hk}\Delta_k G, \quad n\in \Z,
\end{equation}
where $\Delta_k G = G_k - G_{k-1}$, is well-defined and stationary process. Furthermore, it is straightforward to see that $U_n$ satisfies difference equation
\begin{equation}
\label{discrete_dynamics}
\Delta_n U = \left(e^{-H}-1\right)U_{n-1} + \Delta_n G,
\end{equation}
and hence difference equation (\ref{discrete_dynamics}) is a natural analogy to Langevin equation (\ref{langevin_dynamics}). 
Note also that a stationary process satisfying (\ref{discrete_dynamics}) corresponds to a 
\emph{$AR(1)$-model} with a noise $G$ which does not have independent increments. To define discrete analogy to the Lamperti transform 
$\mathcal{L}_HU_t = t^HU_{\log t}$, we define
$
M=\{m\in \R : \exists n\in \N s.t. m = e^n\}
$
and a process $X:(\Omega,M)\rightarrow \R$ by
$
X_m = e^{H\log m}U_{\log m}.
$
Clearly, for each $a,m\in M$ we have $am\in M$ and
$
X_{am} \law e^{H\log a}X_m.
$
In particular, we have
$
X_{e^{n+1}} - X_{e^n} \law e^H (X_{e^{n}} - X_{e^{n-1}}).
$
With these definitions we are able to give discrete analogy to our main theorem. The proof follows the same lines as in the continuous case and the details are left to the reader.
\begin{thm}
\label{thm-main-discrete}
Let $H>0$ be fixed. Then a process $(U_n)_{n\in \N}$ is stationary if and only if it can be expressed via equation (\ref{rep_U_discrete}) for some unique $G \in \mathcal{G}_H^d$.
\end{thm}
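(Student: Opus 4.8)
The plan is to mirror the continuous proof of Theorem \ref{thm-main}, replacing integrals by sums and the Lamperti transform by its discrete counterpart $X_{e^n}=e^{Hn}U_n$ introduced above. The sufficiency direction is already contained in the discussion preceding the theorem: for $G\in\mathcal{G}_H^d$ the series in \eqref{rep_U_discrete} converges by the very definition of the class, and the resulting $U$ is stationary because $G$ has stationary increments. It therefore remains to treat necessity and uniqueness.

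For necessity, suppose $U$ is stationary, extended to a two-sided sequence indexed by $\Z$ by the usual consistency (Kolmogorov) argument. Guided by the difference equation \eqref{discrete_dynamics}, I would define the candidate noise through its increments by $\Delta_n G := U_n - e^{-H}U_{n-1}$, equivalently $\Delta_n G = e^{-Hn}(X_{e^n}-X_{e^{n-1}})$, together with $G_0=0$; this is the exact discrete analogue of $Y_t=\int_0^t e^{-Hs}\,\ud X_{e^s}$ from Lemma \ref{lemma_easy}. Two properties then have to be checked. First, $G$ has stationary increments: every finite family of increments $G_{n_i}-G_{m_i}$ is one and the same fixed function of the underlying values $(U_j)$ over the relevant range, so stationarity of $U$ transfers directly to stationarity of the increments of $G$. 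Second, $G\in\mathcal{G}_H^d$, and here the key computation is the telescoping identity
\[
\sum_{j=k}^0 e^{jH}\Delta_j G = \sum_{j=k}^0\left(X_{e^j}-X_{e^{j-1}}\right) = X_1 - X_{e^{k-1}} = U_0 - e^{(k-1)H}U_{k-1},
\]
which plays the role of $\int_{-\infty}^0 e^{Hs}\,\ud Y_s = X_1$ in the continuous case.

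I expect the genuine content of the argument to lie precisely in passing to the limit $k\to-\infty$ in this identity. In the continuous proof the corresponding term vanished for free from the assumed path continuity of the self-similar process at the origin, so that $X_{0^+}=X_0=0$; in the discrete setting there is no continuity to invoke, and one must instead argue directly that $e^{Hn}U_n\to 0$ almost surely as $n\to-\infty$ (the discrete counterpart of $X_{0^+}=0$). Once this is granted, the telescoping sum converges to the finite limit $U_0$, which shows $G\in\mathcal{G}_H^d$; running the same telescoping from $-\infty$ up to $n$ then gives $e^{-Hn}\sum_{k=-\infty}^n e^{Hk}\Delta_k G = e^{-Hn}X_{e^n}=U_n$, so that $U$ is indeed recovered from $G$ via \eqref{rep_U_discrete}.

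Finally, uniqueness is cleaner than in the continuous case and needs no auxiliary difference or differential equation. If $G^1,G^2\in\mathcal{G}_H^d$ both represent $U$, then $e^{Hn}U_n = \sum_{k=-\infty}^n e^{Hk}\Delta_k G^1 = \sum_{k=-\infty}^n e^{Hk}\Delta_k G^2$ for every $n$, and subtracting the identities for $n$ and $n-1$ yields $e^{Hn}\Delta_n G^1 = e^{Hn}\Delta_n G^2$, hence $\Delta_n G^1=\Delta_n G^2$ for all $n$; combined with $G^1_0=G^2_0=0$ this forces $G^1=G^2$, completing the proof.
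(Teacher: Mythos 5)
Your overall architecture matches what the paper intends: the paper gives no written proof of Theorem \ref{thm-main-discrete} (it only says the argument ``follows the same lines as in the continuous case''), and your construction $\Delta_n G := U_n - e^{-H}U_{n-1}$ with $G_0=0$, the transfer of stationarity of $U$ to the increments of $G$, and the differencing argument for uniqueness are precisely the discrete translations of Lemma \ref{lemma_easy} and of the proof of Theorem \ref{thm-main}; your uniqueness step is indeed cleaner than the integral-equation argument used in continuous time.

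However, the step you isolate and then merely assume --- that $e^{Hk}U_k \to 0$ almost surely as $k\to-\infty$ for every stationary $U$ --- is a genuine gap, and not a routine one: it is false in general. Take $(U_k)_{k\in\Z}$ i.i.d.\ with $\P(|U_0|>x)=1/\log\log x$ for large $x$; then $\P(|U_{-k}|>e^{Hk})=1/\log(Hk)$ is not summable, so by the second Borel--Cantelli lemma $e^{Hj}|U_j|>1$ for infinitely many $j\to-\infty$ almost surely. Consequently your telescoped sums $\sum_{j=k}^0 e^{jH}\Delta_j G = U_0 - e^{(k-1)H}U_{k-1}$ fail to converge, and since your own differencing computation shows that any $G$ representing $U$ via \eqref{rep_U_discrete} must have exactly these increments, no $G\in\mathcal{G}_H^d$ works for this $U$. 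The needed convergence does hold whenever $\E\log^+|U_0|<\infty$, by the first Borel--Cantelli lemma applied to $\P(|U_0|>\epsilon e^{Hk})$ --- the same scheme the paper uses in Theorem \ref{lma:extension} --- so the fix is to add such a logarithmic moment condition (equivalently, to impose the discrete analogue of the continuity of $X=\mathcal{L}_H U$ at $t=0$, which is what silently rescues Lemma \ref{lemma_easy} in continuous time via the standing path-continuity assumption). Apart from supplying and justifying that hypothesis, your write-up is correct.
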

Since $U$ defined by \eqref{rep_U_discrete} satisfies \eqref{discrete_dynamics}, we immediately get the following corollary.
\begin{cor}
Every discrete time stationary process $(U_n)_{n\in \N}$ can be represented as an $AR(1)$-process.
\end{cor}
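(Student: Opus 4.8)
The plan is to read the $AR(1)$ representation directly off the discrete main theorem, since the difference equation \eqref{discrete_dynamics} is already of autoregressive type and no new analysis is needed. First I would invoke Theorem \ref{thm-main-discrete}: given a stationary process $(U_n)_{n\in\N}$, fix any $H>0$ and obtain the unique $G\in\mathcal{G}_H^d$ for which $U$ admits the representation \eqref{rep_U_discrete}. The content of the discussion preceding the corollary is precisely that this same $U$ then solves the difference equation \eqref{discrete_dynamics}, namely $\Delta_n U = (e^{-H}-1)U_{n-1} + \Delta_n G$.

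Next I would simply rearrange this identity into standard $AR(1)$ form. Writing $\Delta_n U = U_n - U_{n-1}$ and collecting the $U_{n-1}$ terms gives
\[
U_n = e^{-H}U_{n-1} + \varepsilon_n, \qquad \varepsilon_n := \Delta_n G,
\]
which is exactly an $AR(1)$ recursion with autoregressive coefficient $\phi = e^{-H}$ and innovation sequence $(\varepsilon_n)_{n}$. Since $H>0$ we have $\phi\in(0,1)$, so the coefficient lies in the stable range $|\phi|<1$ that is compatible with stationarity; moreover the innovations $\varepsilon_n=\Delta_n G$ form a stationary sequence because $G$ has stationary increments. Note that this works for every $H>0$, so in fact any prescribed coefficient in $(0,1)$ can be realised by a suitable choice of the driving $G$.

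The only point worth emphasising — rather than a genuine obstacle — is that the resulting $AR(1)$ representation need not be of the classical form with white (independent) noise: the innovations $(\varepsilon_n)$ inherit only stationarity from the stationary increments of $G$ and are in general not independent, exactly as remarked after \eqref{discrete_dynamics}. The statement should therefore be understood as an $AR(1)$ model driven by a coloured stationary noise, and the existence and uniqueness of the driving $G$ are furnished entirely by Theorem \ref{thm-main-discrete}. As every step is an algebraic rewriting of an already-established identity, no further estimates are required.
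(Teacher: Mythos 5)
Your proposal is correct and follows exactly the paper's route: the paper also deduces the corollary immediately from Theorem \ref{thm-main-discrete} together with the fact that the representation \eqref{rep_U_discrete} satisfies the difference equation \eqref{discrete_dynamics}, which rearranges to $U_n = e^{-H}U_{n-1} + \Delta_n G$. Your closing caveat that the innovations are merely stationary rather than white matches the remark the paper places right after the corollary, so nothing is missing.
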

\begin{rmk}
Compared to the literature, stationary $ARMA(p,q)$ models and their extensions are widely applied and they have received a lot of attention. 
Furthermore, by famous Wold's decomposition theorem every discrete time stationary process can be viewed as $MA(\infty)$ model. 
According to previous corollary, every such model reduces to $AR(1)$ model with a noise which does not have independent increments. 
\end{rmk}

\section{Examples and applications}
\label{sec:ex}

\subsection{Additive self-similar processes} 
\label{subsec:additive}
A process $X$ is called additive if $X$ is stochastically continuous with cadlag paths (i.e. right-continuous with left-limits), $X$ has independent increments and $X_0=0$. 
If in addition $X$ has also stationary increments, then $X$ is a Levy process which is particularly important and widely applied class of processes. In \cite{jean-pit-yor} 
the authors studied $H$-self-similar additive processes $X$ and proved that such $X$ can be represented as an indefinite integral with respect to a Levy process. Furthermore, 
as a corollary (Corollary 2 in \cite{jean-pit-yor}) the authors obtained that the inverse Lamperti transform of an additive $H$-self-similar process $X$ can be described as a 
solution to a Langevin equation driven by a Levy process. Since the integrals with respect to Levy processes can be defined and they satisfy the key integration by parts 
formula presented 
in this paper, one can easily check that the results in \cite{jean-pit-yor} can be recovered from the main theorems presented in this paper applied to a special case. 
Indeed, if $X$ is $H$-self-similar process additive process it is clear from representation 
(\ref{si-rep}) that the corresponding process $G$ is also an additive process with stationary increments, and hence a Levy process. Conversely, given a Levy process $G$ it 
is straightforward to see from representation (\ref{ss-process}) that the corresponding process $X$ defines an $H$-self-similar additive process. Furthermore, representation 
of $\mathcal{L}_H^{-1}X$ as a solution to a Levy-driven Langevin equation (Corollary 2 of \cite{jean-pit-yor}) follows immediately from Theorem \ref{thm-main}.
\subsection{Gaussian processes}
It is known that a continuous time stationary Gaussian process $U_t$ is either continuous or unbounded on every interval $[a,b]$ (see, e.g. \cite{adler}) and the latter case is 
hardly interesting. Furthermore, every continuous Gaussian process $G\in\mathcal{G}$ is also an element of $\mathcal{G}_H$ for every $H>0$. Consequently, every continuous 
time stationary Gaussian process $U_t$ can be represented as the unique solution to Langevin equation with some noise term $G\in\mathcal{G}_H$. 
Conversely, given any Gaussian process $G\in\mathcal{G}$ and given any fixed $H>0$, the Langevin equation determines a unique stationary Gaussian process.
\begin{exm}
Consider Lamperti transform of standard Brownian motion defined by
$$
U_t = \frac{1}{\sqrt{2\theta}}e^{-\theta t}W_{e^{2\theta t}}.
$$
Now $U_t$ can be described as a solution to equation
$$
\ud U_t = -\theta U_t \ud t + \ud G_t
$$
with noise term
\begin{equation}
\label{bm_noise}
G_t = \frac{1}{\sqrt{2\theta}}\int_0^t e^{-\theta u}\ud W_{e^{2\theta u}}.
\end{equation}
Consequently, the Lamperti transform of standard Brownian motion is a pathwise solution to Langevin equation with noise given by (\ref{bm_noise}). On the other hand, it is straightforward to see that $G_t - G_s \law W_t - W_s$. 
\end{exm}
\subsubsection{fractional Ornstein-Uhlenbeck processes}
Recall that a fractional Brownian motion $B^H = (B^H_t)_{t\geq 0}$, one of the best studied Gaussian process, is a centered Gaussian process with covariance
$$
R(s,t) = \frac12\left[s^{2H} + t^{2H} - |t-s|^{2H}\right].
$$
The process $B^H$ is $H$-self-similar and has stationary increments, and the case $H=\frac12$ corresponds to a standard Brownian motion. Fractional Brownian motion $B^H$ can be 
used as a driving force $G$ of the Langevin equation as well as for the associated self-similar process $X$, and the corresponding 
processes are called fractional Ornstein-Uhlenbeck processes. The one arising from Langevin equation was studied in details in \cite{c-k-m}, 
and the corresponding stationary process is called the fractional Ornstein-Uhlenbeck process \emph{of the first kind} by using the terminology introduced in \cite{k-s}. Using 
Corollary \ref{thm:representations} we can deduce that stationary fractional Ornstein-Uhlenbeck process of the first kind can be viewed as an inverse Lamperti transform of $H$-self-similar 
process
$$
X_t = \int_{-\infty}^{\log t}e^{Hs}\ud B^H_s = t^H B^H_{\log t} - H\int_{-\infty}^{\log t}e^{Hs}B^H_s\ud s.
$$
Similarly in \cite{k-s}, the motivation for the authors work was to study transform 
$$
X_t^{(\alpha,H)} = e^{-\alpha t}B_{a_t}^H,
$$
where $a_t = \frac{H}{\alpha}e^{\frac{H}{\alpha}t}$ and $\alpha>0$ is a constant. This process is stationary, and $\alpha=H$ corresponds to the inverse Lamperti transform  
$\mathcal{L}^{-1}_H B^H$. It was proved in \cite{k-s} that $X^{\alpha,H}$ can be viewed as a solution to a Langevin equation
$
\ud X_t^{(\alpha,H)} = - \alpha X_t^{(\alpha,H)} \ud t + \ud Y_t^{(\alpha,H)}
$ 
with a noise process 
$
Y_t^{(\alpha,H)} = \int_0^t e^{-\alpha s}\ud B^H_{a_t},
$
and inspired by a scaling property 
$$
\left(\alpha^H Y_{\frac{t}{\alpha}}^{(\alpha,H)}\right)_{t\geq 0} \law \left(Y_t^{(1,H)}\right)_{t\geq 0}
$$
the authors introduced the fractional Ornstein-Uhlenbeck process of \emph{the second kind} as the unique stationary solution to the Langevin equation
$$
\ud U_t^{\theta} = - \theta U_t^{\theta} \ud t  + \ud Y_t^{(1,H)}.
$$
Now with some simple computations together with Corollary \ref{thm:representations} we obtain that $U_t^{\theta}$ is an inverse Lamperti transform of a 
$\theta$-self-similar process 
$$
X_t^{(\theta,H)} = \int_0^{Ht^{\frac{1}{H}}} \left(\frac{u}{H}\right)^{H(\theta-1)} \ud B_u^H
$$
which unfortunately is not so closely related to the inverse Lamperti transform of fractional Brownian motion except the fact that 
this process shares some useful properties with $\mathcal{L}^{-1}B^H$. For further details on the fractional Ornstein-Uhlenbeck process of the second kind, we 
refer to \cite{k-s,a-m,a-v2}.
\subsection{$ARMA(p,q)$-models}
As a discrete time example, consider a general $ARMA(p,q)$-model (for details on time series, we refer to \cite{hamilton}) defined by
$$
X_n = c + \sum_{k=1}^p \alpha_k X_{n-k} + \sum_{k=1}^q \beta_k \xi_{n-k},
$$
where the sequence $\xi_{n-k}$ is a white noise. By Theorem \ref{thm-main-discrete}, 
stationary $ARMA(p,q)$ process $X_n$ can be uniquely represented as an \emph{$AR(1)$ model} as
$
X_n = e^{-H} X_{n-1} + \Delta_k G
$
with some parameter $H$ and stationary increment noise $G$ which do not have independent increments. 
Similarly, by Wold's representation Theorem every discrete time covariance stationary process can be represented as $MA(\infty)$ model with representation
$$
X_n = \sum_{j=0}^\infty b_j \xi_{n-j} + \eta_n,
$$
where $\xi_{n-j}$ is the stochastic innovation process and $\eta_n$ is deterministic. 
Now, thanks to Theorem \ref{thm-main-discrete}, such process can equivalently be written as \emph{$AR(1)$ model} with representation 
(\ref{rep_U_discrete}). In a similar way, all stationary generalisations of $ARMA(p,q)$ reduce back to \emph{$AR(1)$ model}.

\subsection*{Acknowledgments}
The author thanks Tommi Sottinen and Lasse Leskel\"a for careful reading of the manuscript and their many helpful comments. This research was financed by Aalto University Department of Mathematics and Systems Analysis.

\bibliographystyle{plain}
\bibliography{bibli_v2}
\end{document}